\newcommand\version{March 22, 2022}
\newtheorem{theorem}{Theorem}
\newtheorem{lemma}[theorem]{Lemma}
\theoremstyle{definition}
\theoremstyle{remark}
\newcommand{\1}{\mathbbm{1}}
\renewcommand{\epsilon}{\varepsilon}
\renewcommand{\phi}{\varphi}
\newcommand{\R}{\mathbb{R}}
\newcommand{\Sph}{\mathbb{S}}
\newcommand{\Z}{\mathbb{Z}}
\DeclareMathOperator{\spec}{spec}
\DeclareMathOperator{\Tr}{Tr}
\begin{document}

\title[Two consequences of Davies's Hardy inequality --- \version]{Two consequences of Davies's Hardy inequality}

\author[R.~L.~Frank]{Rupert L. Frank}
\address[R.~L.~Frank]{Mathematisches Institut, Ludwig-Maximilans Universit\"at M\"unchen, Theresienstr.~39, 80333 M\"unchen, Germany, and Department of Mathematics, California Institute of Technology, Pasadena, CA 91125, USA}
\email{r.frank@lmu.de, rlfrank@caltech.edu}

\author[S.~Larson]{Simon Larson}
\address[S.~Larson]{Department of Mathematics, California Institute of Technology, Pasa\-de\-na, CA 91125, USA}
\email{larson@caltech.edu}

\dedicatory{In memory of M.~Z.~Solomyak, on the occasion of his 90th birthday}

\thanks{\copyright\, 2020 by the authors. This paper may be reproduced, in its entirety, for non-commercial purposes.\\
	U.S.~National Science Foundation grants DMS-1363432 and DMS-1954995 (R.L.F.) and Knut and Alice Wallenberg Foundation grant KAW~2018.0281 (S.L.) are acknowledged.\\
	In March 2022, Yi Huang made us aware of the fact that Theorem \ref{lieb} and its derivation from Davies's Hardy inequality are known and appear as \cite[Theorem 1.5.11]{Da0}. Moreover, he pointed out a typographical error in the definition of $\delta(x)$, which propagated through the paper and which is corrected here. We thank Yi Huang for these comments.}

\maketitle

\section{Introduction}

In this short note we would like to show that one can use Davies's Hardy inequality to rederive well-known results of Lieb \cite{Li} and Rozenblum \cite{Ro}. Throughout the following we fix an open set $\Omega\subset\R^d$ and define, for $\omega\in\Sph^{d-1}$,
$$
\delta(x) := \left( d \left| \Sph^{d-1}\right|^{-1} \int_{\Sph^{d-1}} d_\omega(x)^{-2}\,d\omega \right)^{-1/2}
\quad\text{where}\qquad
d_\omega(x) := \inf\{ |t|:\ x+t\omega\notin\Omega \}
$$
(with the convention that $\inf\emptyset = 0$). Then Davies's Hardy inequality \cite{Da} states that
\begin{equation}
	\label{eq:davies}
	\int_\Omega |\nabla u|^2\,dx \geq \frac14 \int_\Omega \delta^{-2}|u|^2\,dx
	\qquad\text{for all}\ u\in H^1_0(\Omega) \,.
\end{equation}
The following simple lemma is key to our argument.

\begin{lemma}\label{davieslemma}
	For any $x\in\Omega$ and any $\rho>0$,
	$$
	|\Omega\cap B_\rho(x)|\geq  (1- d^{-1} \rho^2 \delta(x)^{-2}) |B_\rho(x)| \,.
	$$
\end{lemma}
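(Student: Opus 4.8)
The plan is to estimate the measure of the \emph{complement} $B_\rho(x)\setminus\Omega$ and to recognize this as a statement that unfolds naturally in polar coordinates centred at $x$. First I would record the two elementary identities $|B_\rho(x)| = d^{-1}|\Sph^{d-1}|\,\rho^d$ and, directly from the definition of $\delta$, $d^{-1}\delta(x)^{-2} = |\Sph^{d-1}|^{-1}\int_{\Sph^{d-1}} d_\omega(x)^{-2}\,d\omega$. Substituting these, the claimed inequality is seen to be equivalent to
\begin{equation*}
	|B_\rho(x)\setminus\Omega| \leq \frac{\rho^{d+2}}{d}\int_{\Sph^{d-1}} d_\omega(x)^{-2}\,d\omega \,.
\end{equation*}
If the integral on the right is infinite there is nothing to prove, so I may assume it is finite; in particular $d_\omega(x)>0$ for a.e.\ $\omega$.

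Next I would pass to polar coordinates, writing a generic point of $B_\rho(x)$ as $x+r\omega$ with $r\in(0,\rho)$ and $\omega\in\Sph^{d-1}$, so that
\begin{equation*}
	|B_\rho(x)\setminus\Omega| = \int_{\Sph^{d-1}}\int_0^\rho \1[x+r\omega\notin\Omega]\,r^{d-1}\,dr\,d\omega \,.
\end{equation*}
The key observation---and really the only non-routine point in the argument---is an elementary one-dimensional fact about each ray: if $r\in(0,\rho)$ is such that $x+r\omega\notin\Omega$, then $r$ lies in the set $\{|t|:x+t\omega\notin\Omega\}$ and hence $r\geq d_\omega(x)$. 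Consequently the integrand vanishes for $r<d_\omega(x)$, and for every $\omega$ and $r$ one has the pointwise bound
\begin{equation*}
	\1[x+r\omega\notin\Omega] \leq \1[r\geq d_\omega(x)] \leq r^2\, d_\omega(x)^{-2} \,,
\end{equation*}
the last inequality being trivial in each of the cases $r\geq d_\omega(x)$ and $r<d_\omega(x)$.

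It then remains to integrate and keep track of the constant. For fixed $\omega$ I would estimate
\begin{equation*}
	\int_0^\rho \1[x+r\omega\notin\Omega]\,r^{d-1}\,dr \leq \frac{\rho^d}{d}\,\1[d_\omega(x)<\rho] \leq \frac{\rho^{d+2}}{d}\, d_\omega(x)^{-2} \,,
\end{equation*}
using in the first step that the integrand is supported in $(d_\omega(x),\rho)$ (which I bound by $\rho^d/d$), and in the second step again that $\1[d_\omega(x)<\rho]\leq \rho^2 d_\omega(x)^{-2}$; note that this route lands \emph{exactly} on the constant $1/d$ demanded by the statement. Integrating over $\omega\in\Sph^{d-1}$ produces the displayed equivalent inequality and finishes the proof. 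The only bookkeeping requiring a word of care is the set of degenerate directions where $d_\omega(x)=0$, i.e.\ where the whole line $x+\R\omega$ stays in $\Omega$: there the integrand on the left vanishes identically while the right-hand bound is $+\infty$, so the estimate is trivially valid and these directions contribute nothing.
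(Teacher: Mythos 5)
Your proof is correct and is essentially the paper's own argument written in complementary form: both pass to polar coordinates centred at $x$, use that each ray $t\mapsto x+t\omega$ stays in $\Omega$ for $0<t<d_\omega(x)$, and conclude via the Chebyshev-type bound $\1[d_\omega(x)<\rho]\leq \rho^2 d_\omega(x)^{-2}$ (which the paper applies in integrated form on the sphere and you apply pointwise before integrating). The only difference is bookkeeping --- you bound $|B_\rho(x)\setminus\Omega|$ from above where the paper bounds $|\Omega\cap B_\rho(x)|$ from below --- and both routes land on the same constant.
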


\begin{proof}
	We have
	$$
	|\Omega \cap B_\rho(x)| = \int_{\Sph^{d-1}} \int_0^\rho \1_\Omega(x+t\omega) \,t^{d-1}\,dt\,d\omega
	$$
	and clearly, for any $\omega \in\Sph^{d-1}$ with $d_\omega(x)>\rho$, we have $x+t\omega\in\Omega$ for all $t\in(0,\rho)$. Thus,
	\begin{equation}\label{eq:davieslemma}
		|\Omega \cap B_\rho(x)| \geq |\{ \omega\in\Sph^{d-1}:\ d_\omega(x)> \rho \}| d^{-1} \rho^d \,.
	\end{equation}
	On the other hand, clearly, 
	$$
	\rho^{-2} |\{ \omega\in\Sph^{d-1}:\ d_\omega(x)\leq \rho \}| \leq \int_{\Sph^{d-1}} d_\omega(x)^{-2}\,d\omega = d^{-1} |\Sph^{d-1}|\ \delta(x)^{-2} \,,
	$$
	or, equivalently,
	$$
	|\{ \omega\in\Sph^{d-1}:\ d_\omega(x)> \rho \}|
	\geq \left( 1- d^{-1} \rho^2 \delta(x)^{-2} \right) |\Sph^{d-1}| \,.
	$$
	Inserting this bound into \eqref{eq:davieslemma} implies the lemma.
\end{proof}


\section{A theorem of Lieb}

Let $-\Delta_\Omega^D$ be the Dirichlet Laplacian in $L^2(\Omega)$ and
\begin{equation}
	\label{eq:ev}
	\lambda_\Omega := \inf\spec (-\Delta_\Omega^D) = 
	\inf\left\{ \int_\Omega |\nabla u|^2\,dx :\ u\in H^1_0(\Omega) \,,\ \int_\Omega |u|^2\,dx = 1 \right\}.
\end{equation}
It is well-known that if $\Omega$ is mean-convex, then $\lambda_\Omega$ is bounded from below by a constant times the inverse square of the radius of the largest ball contained in $\Omega$ and that this is not true for general open $\Omega$. It is a theorem of Lieb \cite{Li} that this remains true for general open $\Omega$, provided `the largest ball contained in $\Omega$' is replaced by `a ball that intersects $\Omega$ significantly'. Here we give a simple alternative proof of this result using \eqref{eq:davies} (albeit with a worse constant)\footnote{Davies's result \cite[Theorem 1.5.11]{Da} and Lieb's \cite{Li} are superior if the supremum in Theorem \ref{lieb} becomes very small.}.

\begin{theorem}\label{lieb}
	Let $\Omega\subset\R^d$ be open. Then for any $\rho>0$,
	$$
	\lambda_\Omega \geq \frac d{4\rho^2} \left( 1- \sup_{x\in\Omega} \frac{|\Omega\cap B_\rho(x)|}{|B_\rho(x)|} \right).
	$$
\end{theorem}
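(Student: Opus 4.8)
The plan is to combine the pointwise geometric information in Lemma~\ref{davieslemma} with the integrated inequality \eqref{eq:davies}. First I would rearrange the conclusion of the lemma. Dividing the lemma's bound by $|B_\rho(x)|$ and solving for $\delta(x)^{-2}$ gives, for every $x\in\Omega$,
$$
\delta(x)^{-2} \geq \frac{d}{\rho^2}\left(1 - \frac{|\Omega\cap B_\rho(x)|}{|B_\rho(x)|}\right).
$$
The content of this step is that the lemma converts a lower bound on the local density of $\Omega$ near $x$ into a lower bound on the Hardy weight $\delta(x)^{-2}$ at that point.

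Next I would pass to a bound that is uniform in $x$. Since $1 - |\Omega\cap B_\rho(x)|/|B_\rho(x)| \geq 1 - \sup_{y\in\Omega}|\Omega\cap B_\rho(y)|/|B_\rho(y)|$ for every $x$, the previous display yields
$$
\delta(x)^{-2} \geq \frac{d}{\rho^2}\left(1 - \sup_{y\in\Omega}\frac{|\Omega\cap B_\rho(y)|}{|B_\rho(y)|}\right)
\qquad\text{for all } x\in\Omega,
$$
so the Hardy weight is bounded below by a single constant, which is positive precisely when the supremum is strictly less than $1$.

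Finally I would insert this uniform lower bound into Davies's inequality \eqref{eq:davies}: for any $u\in H^1_0(\Omega)$,
$$
\int_\Omega |\nabla u|^2\,dx \geq \frac14\int_\Omega \delta^{-2}|u|^2\,dx \geq \frac{d}{4\rho^2}\left(1 - \sup_{y\in\Omega}\frac{|\Omega\cap B_\rho(y)|}{|B_\rho(y)|}\right)\int_\Omega |u|^2\,dx.
$$
Restricting to $u$ normalized by $\int_\Omega |u|^2\,dx = 1$ and taking the infimum, the variational characterization \eqref{eq:ev} of $\lambda_\Omega$ delivers the claimed bound.

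As for obstacles, there is essentially no hard analytic step: once the lemma is available the argument is a short chain of inequalities. The only point deserving care is the direction of the supremum. The lemma produces a \emph{lower} bound on $\delta(x)^{-2}$, so to make it uniform I replace the local density ratio at $x$ by its supremum over $\Omega$, which only decreases the right-hand side and hence preserves the inequality. When the supremum is at least $1$ the stated bound is vacuous, so all the content lies in the regime where a definite fraction of every ball $B_\rho(x)$ falls outside $\Omega$.
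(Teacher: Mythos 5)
Your argument is correct and coincides with the paper's proof: rearrange Lemma~\ref{davieslemma} into the pointwise lower bound $\delta(x)^{-2}\geq d\rho^{-2}\bigl(1-|\Omega\cap B_\rho(x)|/|B_\rho(x)|\bigr)$, pass to the supremum, and insert the resulting uniform bound into \eqref{eq:davies} via the variational characterization \eqref{eq:ev}. Nothing further is needed.
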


Clearly, this theorem implies for all $0<\theta<1$,
$$
\lambda_\Omega \geq \frac{d(1-\theta)}{4\rho_\theta^2} \,,
\qquad\text{where}\qquad
\rho_\theta := \inf\left\{ \rho>0 :\ \sup_{x\in\Omega} \frac{|\Omega\cap B_\rho(x)|}{|B_\rho(x)|} \leq \theta \right\}.
$$

\begin{proof}
	Inserting \eqref{eq:davies} into \eqref{eq:ev}, we obtain
	$$
	\lambda_\Omega \geq \frac14 \inf\left\{ \int_\Omega \delta^{-2} |u|^2\,dx :\ u\in H^1_0(\Omega) \,,\ \int_\Omega |u|^2\,dx = 1 \right\} \geq \frac14 \inf_\Omega \delta^{-2} \,.
	$$
	Inserting the lower bound on $\delta^{-2}$ from Lemma \ref{davieslemma} we obtain the theorem.
\end{proof}

\emph{Remarks.} (1) The theorem remains valid for the principal eigenvalue of the $p$-Laplacian with $1<p<\infty$. This follows from the validity of the analogue of \eqref{eq:davies} for $1<p<\infty$. Lieb's proof works also in the case $p=1$.\\
(2) If $\lambda$ is an eigenvalue of $-\Delta_\Omega$, then there is an $x\in\Omega$ such that for all $\rho>0$, $\lambda \geq d(4\rho^2)^{-1} (1- |\Omega\cap B_\rho(x)|/|B_\rho(x)|)$. This follows from the same method of proof, by noting that in this case the inequality $\lambda \geq (1/4) \int_\Omega \delta^{-2} |u_0|^2\,dx$ for a normalized eigenfunction $u_0$ implies that there is an $x\in\Omega$ with $\lambda \geq 1/(4\delta(x)^2)$.\\
(3) Lieb's result was improved upon in \cite{MaSh} in the sense that the overlap between $\Omega$ and $B_\rho(x)$ is quantified in terms of capacity instead of measure. It would be interesting to investigate whether there is a strengthening of \eqref{eq:davies} that implies this result.


\section{A theorem of Rozenblum}

We denote by $N_\leq(\lambda,-\Delta_\Omega^D)$ the total spectral multiplicity of $-\Delta_\Omega^D$ in the interval $[0,\lambda]$. It is well-known  \cite{Ro} that for $\Omega$ of finite measure, one has Weyl asymptotics $N_\leq(\lambda,-\Delta_\Omega^D)\sim (2\pi)^{-d}\omega_d |\Omega|\lambda^{d/2}$ as $\lambda\to\infty$, as well as a universal bound $N_\leq(\lambda,-\Delta_\Omega^D)\leq C_d |\Omega|\lambda^{d/2}$ for all $\lambda>0$. A theorem of Rozenblum \cite{Ro} implies, in particular, that sets $\Omega$ that satisfy the reverse inequality $N(\lambda,-\Delta_\Omega^D)\geq \epsilon |\Omega|\lambda^{d/2}$ for some $\lambda>0$ have a substantial `well-structured' component at spatial scale $\lambda^{-1/2}$. 

\begin{theorem}\label{rozenblum}
	For any $\theta\in(0,1]$ there are constants $c_1(\theta),c_2(\theta,d)>0$ with the following property. For any open set $\Omega\subset\R^d$ and any $\lambda>0$ there are disjoint balls $B^{(1)},\ldots,B^{(M)}\subset\R^d$ of radius $c_1 \lambda^{-1/2}$ such that
	$$
	|\Omega\cap B^{(m)}|\geq (1-\theta) |B^{(m)}|
	\qquad\text{for all}\ m=1,\ldots,M
	$$
	and
	$$
	M\geq c_2 \, N_{\leq}(\lambda,-\Delta_\Omega^D) \,.
	$$
\end{theorem}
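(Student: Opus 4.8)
The plan is to connect the eigenvalue counting function $N_\leq(\lambda,-\Delta_\Omega^D)$ to a covering of $\Omega$ by balls of radius $\sim\lambda^{-1/2}$ on which $\Omega$ occupies a substantial fraction of the volume, then extract a disjoint subfamily via a Vitali-type selection. The starting observation is Lieb's theorem (Theorem \ref{lieb}): if $\delta(x)$ is large everywhere, then $\lambda_\Omega$ is large. Contrapositively, a large counting function should force $\delta$ to be small on a set of substantial measure. To make this quantitative I would decompose $\R^d$ into a grid of cubes $Q$ of side length comparable to $\lambda^{-1/2}$ and localize the quadratic form: for each cube, the restriction of $-\Delta_\Omega^D$ (with Neumann conditions on the cube faces) provides a lower bound for the full operator's counting function through the bracketing inequality. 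The key point is a \emph{local} version of Lieb's estimate---on a cube where $\Omega$ is too sparse (i.e.\ $|\Omega\cap Q|$ is small relative to $|Q|$), the local Dirichlet Laplacian has first eigenvalue above $\lambda$, hence contributes nothing to $N_\leq(\lambda,\cdot)$.

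Concretely, I would first record that by Neumann bracketing across the grid,
\begin{equation*}
N_\leq(\lambda,-\Delta_\Omega^D) \leq \sum_{Q} N_\leq(\lambda, -\Delta_{\Omega\cap Q}^{D,N}),
\end{equation*}
where the sum is over grid cubes and each summand uses Dirichlet conditions on $\partial\Omega$ but Neumann conditions on the cube boundary. The next step is to show that the cubes contributing to the right-hand side are precisely (up to constants) those where $\Omega$ fills a definite proportion of the cube. For a cube $Q$ with $|\Omega\cap Q|/|Q|$ below a threshold depending on $\theta$, I would invoke a scaled and localized form of the argument behind Theorem \ref{lieb}: applying Lemma \ref{davieslemma} inside such a cube shows $\inf_{x\in\Omega\cap Q}\delta(x)^{-2}$ exceeds $c\lambda$, which by Davies's inequality \eqref{eq:davies} forces the bottom of the local Dirichlet spectrum above $\lambda$, so $N_\leq(\lambda,-\Delta_{\Omega\cap Q}^{D,N})=0$ for those cubes. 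The surviving cubes each satisfy $|\Omega\cap Q|\geq (1-\theta')|Q|$ for a suitable $\theta'$, and on each I would bound the local counting function $N_\leq(\lambda,-\Delta_{\Omega\cap Q}^{D,N})$ from above by a dimensional constant, using that a cube of side $\sim\lambda^{-1/2}$ supports only $O_d(1)$ eigenvalues below $\lambda$.

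Combining these gives $N_\leq(\lambda,-\Delta_\Omega^D)\leq C_d\,\#\{\text{good cubes}\}$, where a good cube is one with $|\Omega\cap Q|\geq(1-\theta')|Q|$. Passing from cubes to inscribed balls of radius $c_1\lambda^{-1/2}$ (adjusting the volume fraction constant accordingly), the good cubes yield a collection of balls, each with the desired overlap property, whose cardinality is $\gtrsim N_\leq(\lambda,-\Delta_\Omega^D)$. Since the balls inscribed in distinct grid cubes are automatically disjoint, no separate selection step is needed---this is the advantage of using a fixed grid rather than an arbitrary cover. Setting $M$ equal to the number of good cubes and reading off $c_1,c_2$ from the construction then completes the proof.

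The step I expect to be the main obstacle is the \emph{local} eigenvalue estimate: Davies's inequality \eqref{eq:davies} and Lemma \ref{davieslemma} are global statements about $\Omega$, and $\delta(x)$ is defined using rays that can exit $\Omega$ far from $Q$. To localize cleanly I must either work with $\Omega\cap Q$ as the ambient set (applying Davies's inequality to the truncated domain, which requires checking that $H^1_0$ functions on $\Omega\cap Q$ behave well) or else argue that the volume-fraction hypothesis on the single cube $Q$ already forces $\delta(x)$ to be small for $x$ near the center of $Q$, independent of the behavior of $\Omega$ outside a bounded neighborhood. The cleanest route is probably to prove a converse to Lemma \ref{davieslemma}---namely that small $|\Omega\cap B_\rho(x)|$ at a \emph{single} scale $\rho\sim\lambda^{-1/2}$ forces $\delta(x)^{-2}\gtrsim\lambda$---and then run Davies's inequality on the localized form, tracking how the Neumann boundary conditions on the cube faces interact with the Hardy weight. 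Getting the constants $c_1(\theta)$ and $c_2(\theta,d)$ to come out with the claimed dependence (only $c_2$ depending on $d$) is the delicate bookkeeping that this obstacle controls.
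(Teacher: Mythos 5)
Your overall strategy --- Neumann bracketing over a grid of cubes of side $\sim\lambda^{-1/2}$, killing the sparse cubes by a local spectral estimate, and bounding each dense cube's contribution by $O_d(1)$ --- is genuinely different from the paper's and is essentially Rozenblum's original route. The paper instead feeds Lemma \ref{davieslemma} into the Hardy-weight-improved CLR bound \eqref{eq:floss} of Frank--Loss: the integrand there vanishes off the set $E=\{x\in\Omega:\ \delta(x)\geq(4\lambda)^{-1/2}\}$, so $N_\leq(\lambda,-\Delta_\Omega^D)\leq L_d\lambda^{d/2}|E|$; Lemma \ref{davieslemma} shows every point of $E$ is the center of a ball of radius $(\theta d/(4\lambda))^{1/2}$ with the required density, and a maximal disjoint (Vitali) subfamily of these balls covers $E$ after doubling the radius, which yields the lower bound on $M$. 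No localization of any operator is needed, and no per-cube eigenvalue counting occurs.

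The genuine gap in your proposal is exactly the step you flag: the local eigenvalue estimate on a sparse cube. Davies's inequality \eqref{eq:davies} bounds the \emph{global} Dirichlet form $\int_\Omega|\nabla u|^2\,dx$ from below by $\frac14\int_\Omega\delta^{-2}|u|^2\,dx$; it gives no control of $\int_{\Omega\cap Q}|\nabla u|^2\,dx$ by $\int_{\Omega\cap Q}\delta^{-2}|u|^2\,dx$. Moreover, the form domain of the bracketed operator on $\Omega\cap Q$ (Neumann on $\partial Q$) contains functions that are neither restrictions of elements of $H^1_0(\Omega)$ nor elements of $H^1_0(\Omega\cap Q)$, so neither the global inequality nor the inequality for the truncated domain applies to it. The contrapositive of Lemma \ref{davieslemma} does show that low density of $\Omega$ in a neighborhood of $Q$ forces $\delta(x)^{-2}\geq c_d\,\theta\,\ell^{-2}$ for all $x\in\Omega\cap Q$, but absent a localized Hardy inequality this cannot be converted into a lower bound for the local quadratic form. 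What actually closes the gap is a measure-theoretic Poincar\'e inequality that makes no reference to $\delta$ at all: if $v\in H^1(Q)$ vanishes a.e.\ on a subset of $Q$ of measure at least $\theta|Q|$, then $\|v\|_{L^2(Q)}^2\leq C_d\,\theta^{-1}\ell^2\|\nabla v\|_{L^2(Q)}^2$ (compare $v$ with its mean over $Q$). With that lemma your argument does go through --- it also repairs the passage from cubes to inscribed balls, where the density threshold must be tightened by the factor $\omega_d 2^{-d}$, the volume ratio of the inscribed ball to the cube --- but the resulting proof no longer uses Davies's inequality, which is the point of the paper.
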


Note that choosing $\lambda=\lambda_\Omega$ we obtain again Theorem \ref{lieb}, up to constants.

\begin{proof}
	We begin by giving the proof in dimension $d\geq 3$, where we have
	\begin{equation}
		\label{eq:floss}
		N_{\leq}(\lambda,-\Delta_\Omega^D) \leq L_d \int_\Omega \left(\lambda - \frac{1}{4\delta(x)^2}\right)_+^\frac{d}{2}\,dx \,.
	\end{equation}
	This appears in \cite{FrLo}, but a weaker version with $1/4$ replaced by a smaller constant follows easily by \eqref{eq:davieslemma} and the CLR inequality (see \cite{Fr} for references).
	
	Let $E:=\{ x\in\Omega :\ \delta(x) \geq (4\lambda)^{-1/2} \}$. Then, by Lemma \ref{davieslemma},
	$$
	|\Omega\cap B_\rho(x)|\geq (1-4d^{-1} \rho^2\lambda)|B_\rho(x)|
	\qquad\text{for all}\ x\in E \ \text{and all}\ \rho>0 \,.
	$$
	For $\rho=(\theta d/(4\lambda))^{1/2}$ the claimed density condition is satisfied for each such ball.
	
	Let $B_\rho(x_m)$ be a maximal disjoint subcollection of $B_\rho(x)$, $x\in E$. Then $E\subset \bigcup_m B_{2\rho}(x_m)$ (since for any $x\in E$ there is an $x_m$ such that $B_\rho(x)$ intersects $B_\rho(x_m)$, so $|x-x_m|<2\rho$, so $x\in B_{2\rho}(x_m)$). In case there are infinitely many $x_m$ we are done. If there are finitely many $x_m$, say $M$, then
	\begin{align*}
	\int_\Omega \left(\lambda - \frac{1}{4\delta(x)^2}\right)_+^\frac{d}{2}dx & = \int_E \left(\lambda - \frac{1}{4\delta(x)^2}\right)^\frac{d}{2}dx \leq \lambda^\frac d2 |E| \leq \lambda^\frac d2 \sum_m |B_{2\rho}(x_m)| \\
	& = \omega_d\, 2^d\, \lambda^\frac d2\, \rho^d\, M = d^\frac d2\, \omega_d\, \theta^{\frac d2} M \,.
	\end{align*}
	Together with \eqref{eq:floss} this gives the claimed lower bound on $M$ for $d\geq 3$.
	
	For $d=2$ (the case $d=1$ is easy) we bound $N_{\leq}(\lambda,-\Delta_\Omega^D) \leq \lambda^{-\gamma} \Tr(-\Delta_\Omega^D-2\lambda)_-^\gamma$ for any $\gamma>0$ and use the fact \cite{FrLo} that
	$$
	\Tr(-\Delta_\Omega^D-\mu)_-^\gamma \leq L_{\gamma,2} \int_\Omega \left( \mu - \frac{1}{4\delta(x)^2} \right)_+^{\gamma+1}dx \,.
	$$
	The claimed bound now follows similarly as before.	
\end{proof}

\emph{Remarks.} (1) In Rozenblum's formulation, the balls are required to be centered on $(c\lambda^{-1/2})\Z^d$. This can also be achieved by a minor modification of our proof.\\
(2) In fact, Rozenblum proves a stronger theorem where the overlap between $\Omega$ and $B_\rho(x)$ is quantified in terms of capacity instead of measure. It would be interesting to investigate whether there is a corresponding strengthening of \eqref{eq:floss}.\\
(3)  A related result for Schr\"odinger operators was proved in \cite{Fe}.\\
(4) Theorem \ref{rozenblum} might be useful in the problem of maximizing $\Tr(-\Delta_\Omega-\lambda)_-^\gamma$ among sets $\Omega$ of given measure; see \cite{La,FrLa} for partial results for $\gamma\geq 1$.


\bibliographystyle{amsalpha}

\end{document}